\DeclareFontFamily{OT1}{pzc}{}
\DeclareFontShape{OT1}{pzc}{m}{it}{<-> s * [1.10] pzcmi7t}{}
\DeclareMathAlphabet{\mathpzc}{OT1}{pzc}{m}{it}
\newtheorem{mydef}{Definition}
\newtheorem{mylem}{Lemma}
\newtheorem{myass}{Assumption}
\title{\LARGE \bf
Constructing Piecewise-Polynomial Lyapunov Functions for Local Stability of Nonlinear Systems Using Handelman's Theorem
}
\author{Reza Kamyar, Chaitanya Murti and Matthew M. Peet 
\thanks{R. Kamyar, C. Murti and M. M. Peet are with the Cybernetic Systems and Controls Lab (CSCL) at Arizona State University, Tempe, AZ, 85281 USA, {rkamyar@asu.edu \tt\small}, {cmurti@hawk.iit.edu \tt\small}, {mpeet@asu.edu \tt\small }}%
\thanks{This work was made possible by the National Science Foundation under grants \# CMMI-1301660. }%
}
\begin{document}

\maketitle
\thispagestyle{empty}
\pagestyle{empty}

\begin{abstract}
In this paper, we propose a new convex approach to stability analysis of nonlinear systems with polynomial vector fields. First, we consider an arbitrary convex polytope that contains the equilibrium in its interior. Then, we decompose the polytope into several convex sub-polytopes with a common vertex at the equilibrium. Then, by using Handelman's theorem, we derive a new set of affine feasibility conditions -solvable by linear programming- on each sub-polytope. Any solution to this feasibility problem yields a piecewise polynomial Lyapunov function on the entire polytope.
This is the first result which utilizes Handelman's theorem and decomposition to construct piecewise polynomial Lyapunov functions on arbitrary polytopes.
 In a computational complexity analysis, we show that for large number of states and large degrees of the Lyapunov function, the complexity of the proposed feasibility problem is less than the complexity of certain semi-definite programs associated with alternative methods based on Sum-of-Squares or Polya's theorem. Using different types of convex polytopes, we assess the accuracy of the algorithm in estimating the region of attraction of the equilibrium point of the reverse-time Van Der Pol oscillator.

 \end{abstract}

\section{Introduction}

One approach to stability analysis of nonlinear systems is the search for a decreasing Lyapunov function.  For those systems with polynomial vector fields, searching for polynomial Lyapunov functions has been shown to be necessary and sufficient for stability on any bounded set~\cite{peet2012converse}. However, searching for a polynomial Lyapunov function which proves local stability requires enforcing positivity on a neighborhood of the equilibrium. Unfortunately, while we do have necessary and sufficient conditions for positivity of a polynomial (e.g. Tarski-Seidenberg~\cite{Tarski}, Artin~\cite{Artin}), it has been shown that the general problem of determining whether a polynomial is positive is NP-hard~\cite{blum}.



The most well-known approach to determining positivity of a polynomial is to search for a representation as the sum and quotient of squared polynomials~\cite{powers2011positive}. Such a representation is necessary and sufficient for a polynomial to be positive semidefinite. If we leave off the quotient, the search for a Sum-of-Squares (SOS) is a common sufficient condition for positivity of a polynomial.
The advantage of the SOS approach is that verifying the existence of an SOS representation is a  semidefinite programming problem~\cite{papachristodoulou2013sostools}. This approach was first articulated in~\cite{parrilo}. SOS programming has been used extensively in stability analysis and control including stability analysis of nonlinear systems~\cite{tan2008stability}, robust stability analysis of switched and hybrid systems~\cite{prajna_hybrid}, and stability analysis of time-delay systems~\cite{papachristodoulou2009analysis}.

In addition to the SOS representation of positive polynomials, there exist alternative representation theorems for polynomials which are not globally positive. For example, Polya's Theorem~\cite{polya_book} states that every strictly positive homogeneous polynomial on the positive orthant can be represented as a sum of even-powered monomials with positive coefficients. Multiple variants of Polya's theorem have been proposed, e.g., extensions to the multi-simplex or hypercube~\cite{peres2007parameter,kamyar2012hypercube}, an extension to polynomials with zeros on the boundary of the simplex~\cite{castle2011polya} and an extension to the entire real domain~\cite{de1996effective}.

The downside to the use of SOS (with Positivstellensatz multipliers) or Polya's algorithm for stability analysis of nonlinear systems with many states is computational complexity. Specifically, these methods require us to set up and solve large SDPs. For example, using the SOS algorithm to construct a degree $6$ Lyapunov function on the hypercube for a system with $10$ states implies an SDP with $ \sim 10^8$ variables and  $\sim 10^5$ constraints. Although Polya's algorithm implies similar complexity to SOS, the SDPs associated with Polya's algorithm possess a block-diagonal structure. This has allowed some work on parallel computing approaches such as can be found in~\cite{kamyar2012solving,kamyar2013nonlinear} for robust stability and nonlinear stability, respectively. However, although Polya's algorithm has been generalized to positivity over simplices and hypercubes; as yet no generalization exists for arbitrary convex polytopes. Therefore, in this paper, we look at Handelman's theorem~\cite{handelman}. Specifically, given an arbitrary convex polytope, Handelman's theorem provides a parameterization of all polynomials that are positive on the given polytope.

Some preliminary work on the use of Handelman's theorem and interval evaluation for Lyapunov functions on the hypercube has been suggested in~\cite{2013handelmanlyapunov} and has also been applied to robust stability of positive linear systems in~\cite{briat}.
In this paper, we consider a new approach to the use of Handelman's theorem for computing regions of attraction of stable equilibria by constructing piecewise-polynomial Lyapunov functions on arbitrary convex polytopes. Specifically, we decompose a given convex polytope into a set of convex sub-polytopes that share a common vertex at the origin. Then, on each sub-polytope, we convert Handelman's conditions to linear programming constraints. Additional constraints are then proposed which ensure continuity of the Lyapunov function. We then show the resulting algorithm has polynomial complexity in the number of states and compare this complexity with algorithms based on SOS and Polya's theorem. Finally, we evaluate the accuracy of our algorithm by numerically approximating the domain of attraction of the reverse-time Van Der Pol oscillator.

\section{Definitions and Notation}
\label{Sec:Notation}

In this section, we define convex polytopes, facets of polytopes, decompositions and Handelman bases. \vspace{0.05in}

\begin{mydef} (Convex Polytope)
\label{df:polytope1}
Given the set of vertices $P := \{ p_i \in \mathbb{R}^n, i=1, \cdots, K \}$, define the \emph{convex polytope} $\Gamma_P$ as \vspace{-0.07in}
\[
\Gamma_P := \{ x \in \mathbb{R}^n: x = \sum_{i=1}^K \mu_i p_i : \mu_i \in [0,1]
\text{ and } \sum_{i=1}^K \mu_i = 1 \}.
\]
\end{mydef}
Every convex polytope can be represented as
\[
\Gamma := \{ x \in \mathbb{R}^n: w_i^T x+u_i \geq 0, i=1, \cdots, K\},
\]
for some $w_i \in \mathbb{R}^n, u_i \in \mathbb{R}, i=1, \cdots, K$. Throughout the paper, every polytope that we use contains the origin. \vspace{0.05in}

\begin{mydef}
Given a bounded polytope of the form $\Gamma:= \{ x \in \mathbb{R}^n: w_i^T x + u_i \geq 0, i=1, \cdots, K \}$, we call
\begin{align*}
\zeta^i(\Gamma)&:=\{ x \in \mathbb{R}^n: w_i^Tx+u_i = 0 \text{ and } w_j^Tx+u_j \geq 0 \\
&\qquad\qquad\qquad\quad\quad\quad\;\;\;\; \text{ for } j \in \{1, \cdots,K \} \}
\end{align*}
the $i-$th facet of $\Gamma$ if $\zeta^i(\Gamma) \neq \emptyset$.
\end{mydef}

\vspace{0.05in}

\begin{mydef}($D-$decomposition)
\label{df:decomposition}
Given a bounded polytope of the form $\Gamma:= \{ x \in \mathbb{R}^n: w_i^T x + u_i \geq 0, i=1, \cdots, K \}$, we call $D_{\Gamma}:=\{D_i\}_{i=1,\cdots,L}$ a $D-$\textit{decomposition} of $\Gamma$ if
\begin{equation*}
D_i := \{ x \in \mathbb{R}^n: h^T_{i,j} x + g_{i,j} \geq 0, j=1, \cdots,m_i \}
\label{eq:decomposition}
\end{equation*}
$\text{ for some } h_{i,j} \in \mathbb{R}^n, \, g_{i,j} \in \mathbb{R}$, such that $\cup_{i=1}^L D_i = \Gamma,\, \cap_{i=1}^L D_i = \{0\}$ and $\text{int}(D_i) \cap \text{int}(D_j) = \emptyset$.
\end{mydef} \vspace{0.05in}

\begin{mydef} (The Handelman basis associated with a polytope)
\label{df:Handelman_basis}
Given a polytope of the form
\[\Gamma:=\{x\in\mathbb{R}^n: w_i^Tx+u_i \geq 0,\;i = 1,\cdots,K\},\]
we define the set of \emph{Handelman bases}, indexed by
\begin{equation}\label{eq:E_d}
\alpha\in E_{d,K}:=\{\alpha\in\mathbb{N}^K:|\alpha|_1\leq d\}\end{equation} \vspace{-0.07in}
as \vspace{-0.07in}
\[\Theta_d(\Gamma):=\{\rho_\alpha(x): \rho_\alpha(x) = \prod_{i=1}^K(w_i^Tx+u_i)^{\alpha_i},\;\alpha\in E_{d,K} \}.\]

\end{mydef}

\begin{mydef}(Restriction of a polynomial to a facet)\label{df:rstr_poly}
Given a polytope of the form $\Gamma:=\{x\in\mathbb{R}^n:w_i^Tx+u_i,\;i=1,\cdots, K\}$, and a polynomial $P(x)$ of the form
\[
P(x) = \sum_{\alpha\in E_{d,K}}b_{\alpha}\prod_{i=1}^K(w_i^Tx+u_i)^{\alpha_i},
\]
define the \emph{restriction of $P(x)$ to the $k$-th facet of $\Gamma$} as the function \vspace{-0.07in}
\[
P|_{k}(x) := \sum_{\substack{\alpha\in E_d:\alpha_k=0}}b_{\alpha}\prod_{i=1}^K(w_i^Tx+u_i)^{\alpha_i}. \vspace{-0.07in}
\]
\end{mydef}
\vspace{.2in}

We will use the maps defined below in future sections.
\begin{mydef}
\label{df:maps}
 Given $w_i, h_{i,j} \in \mathbb{R}^n$ and $u_i, g_{i,j} \in \mathbb{R}$, let $\Gamma$ be a convex polytope as defined in Definition~\ref{df:polytope1} with $D-$decomposition $D_{\Gamma}:=\{D_i\}_{i=1,\cdots,L}$ as defined in Definition~\ref{df:decomposition}, and let $\lambda^{(k)},\, k=1, \cdots,B$ be the elements of $E_{d,n}$, as defined in \eqref{eq:E_d}, for some $d,n,\in \mathbb{N}$.  For any $\lambda^{(k)} \in E_{d,n}$, let $p_{\{\lambda^{(k)},\alpha,i\}}$ be the coefficient of $b_{i,\alpha} x^{\lambda^{(k)}}$ in \vspace{-0.07in}
\begin{equation}
P_i(x) := \sum_{\alpha \in E_{d,m_i}} b_{i,\alpha} \prod_{j=1}^{m_i} (h^T_{i,j}x+g_{i,j})^{\alpha_j}. \vspace{-0.07in}
\label{eq:V_i}
\end{equation}
Let $N_i$ be the cardinality of $E_{d,m_i}$, and denote by $b_i \in\mathbb{R}^{N_i}$ the vector of all coefficients $b_{i,\alpha}$ .\\
Define $F_{i}: \mathbb{R}^{N_i} \times \mathbb{N} \rightarrow \mathbb{R}^{B}$ as \vspace{-0.07in}
\begin{equation}\label{eq:Fdef}
F_{i}(b_{i},d) := \left[
 \sum_{\alpha \in E_{d,m_i}}  p_{\{\lambda^{(1)},\alpha,i\}} b_{i,\alpha} , \; \cdots \; ,\hspace{-.2cm}  \sum_{\alpha \in E_{d,m_i}}  p_{\{\lambda^{(
B)},\alpha,i\}} b_{i,\alpha} \right]^T
\end{equation}
for  $i=1, \cdots,L$. In other words, $F_i(b_i,d)$ is the vector of the coefficients of $P_i(x)$ after expansion.\\
\noindent Define $H_{i}: \mathbb{R}^{N_i} \times \mathbb{N} \rightarrow \mathbb{R}^{Q} $ as \vspace{-0.07in}
\begin{equation}\label{eq:Hdef}
H_{i}(b_i,d) := \left[
 \sum_{\alpha \in E_{d,m_i}}\hspace{-.1cm}  p_{\{\delta^{(1)},\alpha,i\}} b_{i,\alpha } \; , \; \cdots \; , \hspace{-.2cm}  \sum_{\alpha \in E_{d,m_i}} \hspace{-.1cm} p_{\{\delta^{(
Q)},\alpha,i\}} b_{i,\alpha} \right]^T
\end{equation}
for $i = 1, \cdots,L$, where we have denoted the elements of $\{ \delta \in \mathbb{N}^n: \delta = 2e_j \text{ for } j=1, \cdots,n \}$ by $\delta^{(k)}, k=1, \cdots, Q$, where $e_j$ are the canonical basis for $\mathbb{N}^n$. In other words, $H_i(b_i,d)$ is the vector of coefficients of square terms of $P_i(x)$ after expansion.\\
\noindent Define $J_{i}: \mathbb{R}^{N_i} \times \mathbb{N}\times \{1,\cdots,m_i\} \rightarrow \mathbb{R}^{B} $ as \vspace{-0.07in}
\begin{equation}\label{eq:Jref}
J_{i}(b_{i},d,k) := \left[
 \sum_{\substack{\alpha \in E_{d,m_i}\\\alpha_k=0 }}  \hspace{-.3cm} p_{\{\lambda^{(1)},\alpha,i\}} b_{i,\alpha} \; \cdots \; ,  \sum_{\substack{\alpha \in E_{d,m_i}\\\alpha_k=0} }\hspace{-.3cm}  p_{\{\lambda^{(
B)},\alpha,i\}} b_{i,\alpha} \right]^T
\end{equation}
for $i = 1, \cdots,L$. In other words, $J_i(b_i,d,k)$ is the vector of coefficients of $P_i|_k(x)$ after expansion.\\
\noindent Given a polynomial vector field $f(x)$ of degree $d_f$, define $G_{i}: \mathbb{R}^{N_i} \times \mathbb{N} \rightarrow \mathbb{R}^{Z} $ as \vspace{-0.07in}
\begin{equation}\label{eq:Gdef} 
G_{i}(b_{i},d) := \left[
 \sum_{\alpha \in E_{d,m_i}}  s_{\{\eta^{(1)},\alpha,i\}} b_{i,\alpha} \; , \; \cdots \; , \hspace{-.2cm} \sum_{\alpha \in E_{d,m_i}}  s_{\{\eta^{(
P)},\alpha,i\}} b_{i,\alpha} \right]^T
\end{equation}
for $i=1, \cdots,L$,  and where we have denoted the elements of $E_{d+d_f-1,n}$ by $\eta^{(k)}$, $k=1,\cdots,Z$. For any $\eta^{(k)} \in E_{d+d_f-1,n}$, we define $s_{\{ \eta^{(k)}, \alpha, i \}}$ as the coefficient of $b_{i,\alpha} x^{\eta^{(k)}}$ in $\langle \nabla P_i(x),f(x) \rangle$, where $P_i(x)$ is defined in~\eqref{eq:V_i}. In other words, $G_{i}(b_{i},d)$ is the vector of coefficients of $\langle \nabla P_i(x), f(x) \rangle$.\\
\noindent Define $R_i(b_{i},d): \mathbb{R}^{N_i} \times \mathbb{N} \rightarrow \mathbb{R}^C$ as
\begin{equation}\label{eq:Rdef}
R_i(b_{i},d):=
\left[ b_{i,\beta^{(1)}} \; , \; \cdots \; , \; b_{i,\beta^{(C)}} \right]^T,
\end{equation}
for $i = 1, \cdots,L$, where we have denoted the elements of
\[
S_{d,m_i}:=\{ \beta \in E_{d,m_i} :  \beta_j = 0 \text{ for } j \in \{ j \in \mathbb{N}: g_{i,j} = 0 \} \}
\]
 by $\beta^{(k)}$, $k=1,\cdots,C$. Consider $P_i$  in the Handelman basis $\Theta_d(\Gamma)$. Then, $R_i(b_i,d)$ is the vector of coefficients of monomials of $P_i$ which are nonzero at the origin.\\
 It can be shown that the maps $F_i,\;H_i,\;J_i,\;G_i\text{ and }R_i$ are affine in $b_i$.
\end{mydef}

\begin{mydef}(Upper Dini Derivative)
\label{df:Dini} Let $f:\mathbb{R}^n\rightarrow \mathbb{R}^n$ be a continuous map. Then, define the upper Dini derivative of a function $V:\mathbb{R}^n\rightarrow\mathbb{R}$ in the direction $f(x)$
as
\[D^+(V(x),f(x)) = \limsup_{h\rightarrow 0^+}\frac{V(x+hf(x))-V(x)}{h}.\]
\end{mydef}
It can be shown that for a continuously differentiable $V(x)$,
\[D^+(V(x),f(x)) = \langle \nabla V(x),f(x)\rangle.
\]

\section{Background and Problem Statement}
\label{sec:Background}

We address the problem of local stability of nonlinear systems of the form \vspace{-0.1in}
\begin{equation}
\dot{x}(t) = f(x(t)),
\label{eq:system}
\end{equation}
about the zero equilibrium, where $f:\mathbb{R}^n \rightarrow \mathbb{R}^n$. We use the following Lyapunov stability condition.

\begin{thm}
\label{thm:lyap}
For any $\Omega \subset \mathbb{R}^n$ with $0 \in \Omega$, suppose there exists
a continuous function $V: \mathbb{R}^n \rightarrow \mathbb{R} $ and continuous positive definite functions $W_1, W_2, W_3$,
\begin{align*}
& W_1(x) \leq V(x) \leq W_2 (x) \text{ for } x \in  \Omega  \text{ and} \\
&  D^+(V(x),f(x)) \leq -W_3(x)  \text{ for } x \in \Omega,
\end{align*}
then System~\eqref{eq:system} is asymptotically stable on $\{ x: \{ y: V(y) \leq V(x) \} \subset \Omega \}$.
\end{thm}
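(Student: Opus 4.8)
The plan is to run the classical Lyapunov argument, adapted to the nondifferentiable setting forced by the use of the upper Dini derivative. First I would record that the sandwich $W_1(x)\leq V(x)\leq W_2(x)$ with $W_1,W_2$ positive definite forces $V(0)=0$ and $V(x)>0$ for $x\neq 0$ in $\Omega$, so $V$ is itself positive definite on $\Omega$. Next, fix $x_0$ in the claimed region $R := \{x : \{y : V(y) \leq V(x)\} \subset \Omega\}$ and set $c := V(x_0)$, so that the sublevel set $\Omega_c := \{y : V(y) \leq c\}$ is closed (it is the preimage of $(-\infty,c]$ under the continuous map $V$) and, by definition of $R$, contained in $\Omega$; since $\Omega$ is bounded (in our application it is a polytope), $\Omega_c$ is compact. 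The goal then reduces to showing that $\Omega_c$ is forward invariant and that every solution starting in $\Omega_c$ converges to the origin.

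The crucial step, and the one I expect to be the main obstacle, is to pass from the directional derivative $D^+(V(x),f(x))$ appearing in the hypothesis to the upper right derivative of the scalar function $t \mapsto V(x(t))$ along a solution $x(t)$ of \eqref{eq:system}. Writing $x(t+h) = x(t) + h f(x(t)) + o(h)$ from the differential equation and exploiting local Lipschitz continuity of $V$ (which holds for the piecewise-polynomial candidates of interest, and for which $f$ being polynomial guarantees local existence and uniqueness), I would show that replacing the true increment $x(t+h)-x(t)$ by $hf(x(t))$ changes $V$ only by $o(h)$, so that $\limsup_{h\to 0^+} h^{-1}\bigl(V(x(t+h))-V(x(t))\bigr) = D^+(V(x(t)),f(x(t)))$. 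Combined with the hypothesis this gives an upper right derivative bound $\leq -W_3(x(t)) \leq 0$ whenever $x(t)\in\Omega$. A comparison lemma for functions with nonpositive upper right derivative then yields that $t \mapsto V(x(t))$ is nonincreasing, and a standard continuation argument on the maximal interval of existence shows the trajectory cannot leave $\Omega_c$; this establishes forward invariance of $\Omega_c \subset \Omega$ and, in particular, existence of the solution for all $t \geq 0$.

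Stability in the $\varepsilon$--$\delta$ sense then follows routinely: given $\varepsilon$, positive definiteness of $W_2$ lets me fit a sublevel set of $V$ inside the $\varepsilon$-ball, and positive definiteness of $W_1$ lets me choose a ball of initial conditions mapping into that sublevel set, after which invariance traps the trajectory. For attractivity I would argue by contradiction: since $V(x(t))$ is nonincreasing and bounded below by $0$, it converges to some $c^\ast \geq 0$. If $c^\ast > 0$, the trajectory remains in the compact annulus $\{y \in \Omega_c : V(y) \geq c^\ast\}$, which excludes the origin, so by continuity and positive definiteness $W_3$ attains a strictly positive minimum $m$ there; the decrease estimate then gives $V(x(t)) \leq V(x_0) - m t \to -\infty$, contradicting $V \geq 0$. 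Hence $c^\ast = 0$, so $W_1(x(t)) \leq V(x(t)) \to 0$, and positive definiteness of $W_1$ (whose sublevel sets contract to $\{0\}$ on the compact $\Omega_c$) finally forces $x(t) \to 0$. The only genuinely delicate point is the Dini-derivative identification of the second paragraph; the remainder is the textbook Lyapunov scheme.
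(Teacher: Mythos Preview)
The paper does not actually prove this theorem: it is stated in Section~\ref{sec:Background} as a known Lyapunov stability condition and then invoked as a black box in the proof of Theorem~\ref{thm:main_theorem}. So there is no ``paper's own proof'' to compare against, and your proposal is supplying an argument the authors deliberately omitted.

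That said, your sketch is the standard one and is essentially correct for the setting the paper cares about. You are right to flag the two places where extra structure is quietly needed beyond the bare hypotheses of the theorem as stated: compactness of the sublevel set $\Omega_c$ requires $\Omega$ bounded (true here, since $\Omega$ is a polytope), and the identification of $D^+(V(x),f(x))$ with the upper right derivative of $t\mapsto V(x(t))$ requires $V$ to be locally Lipschitz (true here, since $V$ is piecewise polynomial on a polytopal decomposition). With those two observations in hand, the rest---monotonicity via a comparison principle, forward invariance, $\varepsilon$--$\delta$ stability, and the contradiction argument for attractivity---is the textbook Lyapunov/LaSalle scheme and goes through without surprises.
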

In this paper, we construct piecewise-polynomial Lyapunov functions which may not have classical derivatives. As such, we use Dini derivatives which are known to exist for  piecewise-polynomial functions. \vspace{0.05in}

\textit{\textbf{Problem statement}:}\label{pb:prob1}
Given the vertices $p_i \in \mathbb{R}^n, i=1, \cdots, K$, we would like to find 
the largest positive $s$ such that there exists a polynomial $V(x)$ where $V(x)$ satisfies the conditions of Theorem~\ref{thm:lyap} on the convex polytope
$\{ x \in \mathbb{R}^n : x = \sum_{i=1}^K \mu_i p_i: \mu_i \in [0,s] \text{ and } \sum_{i=1}^K \mu_i = s \}$.


Given a convex polytope, the following result~\cite{handelman} parameterizes the set of polynomials which are positive on that polytope using the positive orthant.

\begin{thm} (Handelman's Theorem)
\label{thm:Handelman} Given $w_i \in \mathbb{R}^n, u_i \in \mathbb{R}, i=1,\cdots,K$, let $\Gamma$ be a convex polytope as defined in definition~\ref{df:polytope1}. If polynomial $P(x) > 0$ for all $x \in \Gamma$, then there exist $b_\alpha \geq 0$, $\alpha \in \mathbb{N}^K$ such that for some $d \in \mathbb{N}$, \vspace{-0.05in}
\[
P(x) := \sum_{\alpha \in E_{d,K}} b_{\alpha} \prod_{ji=1}^{K} (w^T_{i}x+u_{i})^{\alpha_i}.
\]
\end{thm}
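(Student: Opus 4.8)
The plan is to recognize the set of Handelman-representable polynomials as an \emph{Archimedean semiring} and to obtain the representation from an abstract Positivstellensatz of Kadison--Dubois type. Writing $\ell_i(x):=w_i^Tx+u_i$ for the affine forms defining $\Gamma$, let
\[
T := \Big\{ \sum_{\alpha\in E_{d,K}} b_\alpha \prod_{i=1}^K \ell_i(x)^{\alpha_i} \;:\; d\in\mathbb{N},\; b_\alpha\geq 0 \Big\}
\]
be the cone of all finite nonnegative combinations of products of the $\ell_i$; the theorem is exactly the claim that $P(x)>0$ on $\Gamma$ forces $P\in T$. My first step is the routine observation that $T$ is closed under addition and multiplication and contains $\mathbb{R}_{\geq 0}$ (so $T$ is a semiring), and that its nonnegativity set $\{x:t(x)\geq 0 \text{ for all } t\in T\}$ is precisely $\Gamma$, since the generators $\ell_i$ already cut out $\Gamma$ and all other elements of $T$ are automatically nonnegative there.

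The heart of the argument is to show $T$ is \emph{Archimedean}, meaning that for every polynomial $q$ there is an integer $N$ with $N-q\in T$. This is where boundedness of $\Gamma$ (guaranteed by Definition~\ref{df:polytope1}, as $\Gamma$ is the convex hull of finitely many vertices) is used. Boundedness forces the recession cone $\{y:w_i^Ty\geq 0 \text{ for all }i\}$ to be trivial, and by Stiemke's lemma this yields strictly positive $c_i>0$ with $\sum_i c_iw_i=0$; hence $\sum_i c_i\ell_i(x)\equiv\kappa$ for a positive constant $\kappa$. After the positive rescaling $\ell_i\mapsto c_i\ell_i/\kappa$ (which leaves nonnegativity of coefficients intact) we may assume $\sum_{i=1}^K \ell_i(x)\equiv 1$. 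Because the $\ell_i$ then span all affine functions, each coordinate is a real combination $x_j=\sum_i \gamma_{ij}\ell_i$, so $c+x_j=\sum_i(c+\gamma_{ij})\ell_i$ and $c-x_j=\sum_i(c-\gamma_{ij})\ell_i$ both lie in $T$ once $c>\max_i|\gamma_{ij}|$. A standard degree induction promotes these bounds on the generators $x_j$ to $N-q\in T$ for an arbitrary polynomial $q$, giving the Archimedean property. With $T$ established as an Archimedean semiring whose nonnegativity set is $\Gamma$, the Kadison--Dubois (Becker--Schwartz) representation theorem states that every polynomial strictly positive on $\Gamma$ belongs to $T$, and since membership in $T$ is by definition a finite nonnegative combination of the $\prod_i\ell_i^{\alpha_i}$, this delivers the asserted $b_\alpha\geq 0$ and some degree $d$.

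I expect the main obstacle to be the Archimedean step coupled with the invocation of the abstract representation theorem: verifying that boundedness genuinely dominates every polynomial by an element of $T$ is the technical crux, and the representation theorem itself carries the analytic weight that makes strict positivity sufficient. A more constructive alternative specializes the normalization above to the case $K=n+1$, where $L(x)=(\ell_1(x),\dots,\ell_K(x))$ maps $\Gamma$ bijectively onto the standard simplex; homogenizing $P\circ L^{-1}$ and applying Pólya's theorem to $(\sum_i y_i)^N P$ produces the nonnegative coefficients directly and recovers the statement for a simplex. The difficulty in pushing this second route to general $K$ is exactly the linear relations among the $\ell_i$, which leave $P$ uncontrolled on the part of the simplex outside $L(\Gamma)$; the Archimedean argument is attractive precisely because it sidesteps these relations.
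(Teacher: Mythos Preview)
The paper does not prove Theorem~\ref{thm:Handelman}; it is quoted as a known result and attributed to Handelman's original paper~\cite{handelman}. There is therefore no ``paper's own proof'' to compare against.

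That said, your proposal is a correct outline of one of the standard modern proofs. The steps are sound: boundedness of $\Gamma$ kills the recession cone, Stiemke's lemma produces the strictly positive combination $\sum_i c_i\ell_i\equiv\kappa>0$, the normalization $\sum_i\ell_i\equiv 1$ together with the fact that the $w_i$ span $\mathbb{R}^n$ (again from boundedness) shows every affine function is a real combination of the $\ell_i$, and the ring-closure argument then gives the Archimedean property. Invoking the Kadison--Dubois/Becker--Schwartz representation theorem for Archimedean preprimes finishes it. Two small points worth tightening if you write this out in full: (i) make explicit that $\kappa>0$ follows because $\Gamma$ has nonempty interior (otherwise all $\ell_i$ would vanish identically on $\Gamma$), and (ii) state precisely which version of the representation theorem you use, since the semiring $T$ here is a preprime rather than a quadratic module, so you want the preprime formulation. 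Your remark about the P\'olya route for the simplicial case $K=n+1$ and its breakdown for general $K$ is also accurate.
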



Given a D-decomposition $D_\Gamma := \{ D_i \}_{i=1, \cdots,L}$ of the form
\[D_i :=\{ x \in \mathbb{R}^n: h_{i,j}^T x + g_{i,j} \geq 0, j=1, \cdots, m_i \}
\]
of some polytope $\Gamma$, we parameterize a cone of piecewise-polynomial Lyapunov functions which are positive on $\Gamma$ as
\begin{align*}
V(x) = V_i(x) := \sum_{\alpha \in E_{d,m_i}} b_{i,\alpha} \prod_{j=1}^{m_i} (h^T_{i,j}x+g_{i,j})^{\alpha_j}, \\
 \text{ for } x \in D_i \text{ and } i=1,\cdots,L.
\end{align*}
We will use a similar parameterization of piecewise-polynomials which are negative on $\Gamma$ in order to enforce negativity of the derivative of the Lyapunov function. We will also use linear equality constraints to enforce continuity of the Lyapunov function.


\section{Problem setup}
\label{sec:setup}
We first present some lemmas necessary for the proof of our main result. The following lemma provides a sufficient condition for a polynomial represented in the Handelman basis to vanish at the origin ($V(0)=0$).

\begin{mylem}
\label{Zero Origin}
 Let $D_{\Gamma} := \{ D_i \}_{i=1,\cdots,L}$ be a D-decomposition of a convex polytope $\Gamma$, where
\[
D_i := \{ x \in \mathbb{R}^n: h_{i,j}^T x + g_{i,j} \geq 0, j=1, \cdots, m_i \}.
\]
For each $i\in \{1\cdots,L\}$, let \vspace{-0.05in}
\[
P_i(x) := \sum_{\alpha \in E_{d,m_i}} b_{i,\alpha} \prod_{j=1}^{m_i} (h^T_{i,j}x+g_{i,j})^{\alpha_j}, \vspace{-0.05in}
\]
 $N_i$ be the cardinality $E_{d,m_i}$ as defined in \eqref{eq:E_d}, and let $b_i\in\mathbb{R}^{N_i}$ be the vector of the coefficients $b_{i,\alpha}$ . Consider $R_i:\mathbb{R}^{N_i}\times \mathbb{N}\rightarrow \mathbb{R}^C$ as defined in \eqref{eq:Rdef}. If $R_i(b_i,d)=\mathbf{0}$, then $P_i(x) = 0$ for all $i\in \{1\cdots,L\}$.
\end{mylem}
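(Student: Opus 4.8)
The plan is to evaluate $P_i$ directly at the origin and track which Handelman basis elements survive the substitution. Since the surrounding text identifies the conclusion with the condition $V(0)=0$, I read the statement as asserting $P_i(0)=0$ for each $i$. The first step is to set $x=0$ in the expansion, which gives
\[
P_i(0) = \sum_{\alpha \in E_{d,m_i}} b_{i,\alpha} \prod_{j=1}^{m_i} (h_{i,j}^T \cdot 0 + g_{i,j})^{\alpha_j} = \sum_{\alpha \in E_{d,m_i}} b_{i,\alpha} \prod_{j=1}^{m_i} g_{i,j}^{\alpha_j},
\]
so that each basis element $\rho_\alpha$ contributes only through the constant $\prod_{j=1}^{m_i} g_{i,j}^{\alpha_j}$.

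The key step is to determine exactly which of these constants are nonzero. Using the convention $0^0=1$, the factor $g_{i,j}^{\alpha_j}$ equals $1$ whenever $\alpha_j=0$, is nonzero whenever $g_{i,j}\neq 0$, and vanishes precisely when $g_{i,j}=0$ and $\alpha_j>0$. Hence $\prod_{j=1}^{m_i} g_{i,j}^{\alpha_j}\neq 0$ if and only if $\alpha_j=0$ for every index $j$ with $g_{i,j}=0$, which is exactly the defining condition of the set $S_{d,m_i}$ appearing in \eqref{eq:Rdef}. Consequently every term with $\alpha\notin S_{d,m_i}$ drops out, leaving
\[
P_i(0) = \sum_{\alpha \in S_{d,m_i}} b_{i,\alpha} \prod_{j=1}^{m_i} g_{i,j}^{\alpha_j}.
\]

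To finish, I would recall that $R_i(b_i,d)$ is by definition the vector whose entries are precisely the coefficients $b_{i,\beta}$ with $\beta\in S_{d,m_i}$. Thus the hypothesis $R_i(b_i,d)=\mathbf{0}$ forces $b_{i,\alpha}=0$ for every $\alpha\in S_{d,m_i}$, so that every summand in the last display vanishes and $P_i(0)=0$. The same reasoning applies verbatim for each $i\in\{1,\ldots,L\}$, which yields the conclusion.

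I do not expect a genuine obstacle: the content is a direct evaluation at a single point. The only place requiring care is the bookkeeping around the $0^0=1$ convention, which is exactly what makes $S_{d,m_i}$ single out the basis terms that are nonzero at the origin; geometrically these correspond to the constraints of $D_i$ whose defining hyperplanes do not pass through the common vertex at the origin.
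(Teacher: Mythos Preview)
Your argument is correct and mirrors the paper's own proof: both split the sum according to whether $\alpha\in S_{d,m_i}$, observe that terms with $\alpha\notin S_{d,m_i}$ vanish at $x=0$ because some factor $g_{i,j}^{\alpha_j}$ with $g_{i,j}=0$ and $\alpha_j>0$ appears, and then use $R_i(b_i,d)=\mathbf{0}$ to kill the remaining coefficients. Your reading of the conclusion as $P_i(0)=0$ is also the intended one.
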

\begin{proof}
We can write
\[
P_i(x) = \hspace{-.35cm}\sum_{\alpha \in E_{d,m_i}\backslash S_{d,m_i}} \hspace{-0.55cm} b_{i,\alpha}\prod_{j=1}^{m_i} (h_{i,j}^Tx+g_{i,j})^{\alpha_i} + \hspace{-0.25cm}\sum_{\alpha \in S_{d,m_i}} \hspace{-0.25cm}  b_{i,\alpha} \prod_{j=1}^{m_i} (h_{i,j}^Tx+g_{i,j})^{\alpha_i} ,
\]
where
\[S_{d,m_i} :=\{ \alpha \in E_{d,m_i} :  \alpha_j = 0 \text{ for } j \in \{ j \in \mathbb{N}: g_{i,j} = 0 \} \}.\]
By the definitions of $E_{d,m_i}$ and $S_{d,m_i}$, we know that for each $\alpha \in \ E_{d,m_i}\backslash S_{d,m_i}$ for $i\in\{1,\cdots,L\}$, there exists at least one $j\in \{1,\cdots,m_i\}$ such that $g_{i,j} = 0$ and $\alpha_k>0$. Thus, at $x=0$,
\[
\sum_{\alpha \in E_{d,m_i}\backslash S_{d,m_i}} \hspace{-.4cm} b_{i,\alpha}\prod_{j=1}^{m_i} (h_{i,j}^Tx+g_{i,j})^{\alpha_i} = 0\;\;\;\text{for all }i\in\{1,\cdots,L\}.
\]
Recall the definition of the map $R_i$ from \eqref{eq:Rdef}. Since $R_i(b_i,d)=\mathbf{0}$ for each $i\in\{1,\cdots,L\}$, it follows from that $b_{i,\alpha} = 0$ for each  $\alpha \in S_{d,m_i}$ and $i\in\{1,\cdots,L\}$. Thus,
\[
\sum_{\alpha \in S_{d,m_i}} \hspace{-0.05in}  b_{i,\alpha} \prod_{j=1}^{m_i} (h_{i,j}^Tx+g_{i,j})^{\alpha_i} = 0\;\;\;\text{for all }i\in\{1,\cdots,L\}.
\]
Thus, $P_i(0) = 0$ for all $i\in\{1,\cdots,L\}$.
\end{proof}

This Lemma provides a condition which ensures that a piecewise-polynomial function on a D-decomposition is continuous.
\begin{mylem}
\label{Continuity Lemma}
Let $D_{\Gamma} := \{ D_i \}_{i=1,\cdots,L}$ be a D-decomposition of a polytope $\Gamma$, where
\[
D_i := \{ x \in \mathbb{R}^n: h_{i,j}^T x + g_{i,j} \geq 0, j=1, \cdots, m_i \}.
\]
For each $i\in \{1\cdots,L\}$, let \vspace{-0.1in}
\[P_i(x) := \sum_{\alpha \in E_{d,m_i}} b_{i,\alpha} \prod_{j=1}^{m_i} (h^T_{i,j}x+g_{i,j})^{\alpha_j},\]
 $N_i$ be the cardinality of $E_{d,m_i}$ as defined in \eqref{eq:E_d}, and let $b_i\in \mathbb{R}^{N_i}$ be the vector of the coefficients $b_{i,\alpha}$. Given $i,j \in \{1, \cdots,L \},i \neq j$, let \vspace{-0.1in}
\begin{align}
 \nonumber\Lambda_{i,j}(D_\Gamma) \hspace{-.05cm}&:= \hspace{-.05cm}\left\{ k,l \in \mathbb{N} : k \in \{ 1, \cdots, m_i \}, l \in \{ 1, \cdots, m_j \}\hspace{-.05cm}:\hspace{-.05cm} \right.\\
 &\qquad\qquad\qquad \zeta^k(D_i) \neq \emptyset\text{ and } \left. \zeta^k(D_i) = \zeta^l (D_j) \right\}.\label{eq:lambdaij}\end{align}
  Consider $J_{i}:\mathbb{R}^{N_i}\times \mathbb{N}\times\{1\cdots,m_i\}\rightarrow \mathbb{R}^B$ as defined in \eqref{eq:Jref}. If \vspace{-0.1in}
\[
J_{i}(b_i,d,k) = J_{j}(b_j,d,l) \vspace{-0.1in}
\]
for all $i,j \in \{1, \cdots,L \},\;i\neq j$ and $k,l\in\Lambda_{i,j}(D_\Gamma)$, then the piecewise-polynomial function  \vspace{-0.05in}
\[
P(x)=P_i(x),\quad \text{ for } x\in D_i,\;i=1,\cdots,L   
\]
is continuous for all $x\in \Gamma$.
\end{mylem}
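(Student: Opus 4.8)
The plan is to invoke the standard gluing lemma for finite closed covers: the sub-polytopes $D_i$ are closed, satisfy $\cup_{i=1}^L D_i = \Gamma$ by Definition~\ref{df:decomposition}, and on each $D_i$ the function $P$ equals the polynomial $P_i$, which is continuous. Consequently $P$ is continuous on all of $\Gamma$ as soon as $P_i$ and $P_j$ agree on every pairwise overlap $D_i \cap D_j$. The whole argument therefore reduces to verifying this agreement, and its core is a single algebraic observation about the Handelman basis: on the $k$-th facet $\zeta^k(D_i)$ the defining affine form satisfies $h_{i,k}^T x + g_{i,k} = 0$, so every term of $P_i(x)$ whose exponent $\alpha$ has $\alpha_k > 0$ vanishes identically there.

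First I would make this precise. Restricting $P_i$ to $\zeta^k(D_i)$ kills exactly the terms with $\alpha_k>0$ and leaves the polynomial $P_i|_k$ of Definition~\ref{df:rstr_poly}; that is, $P_i(x) = P_i|_k(x)$ for every $x \in \zeta^k(D_i)$, and symmetrically $P_j(x) = P_j|_l(x)$ for every $x \in \zeta^l(D_j)$. Next I would use the hypothesis. By \eqref{eq:Jref}, the vectors $J_i(b_i,d,k)$ and $J_j(b_j,d,l)$ are the monomial-coefficient vectors of $P_i|_k$ and $P_j|_l$ after expansion, so the equality $J_i(b_i,d,k) = J_j(b_j,d,l)$ forces $P_i|_k$ and $P_j|_l$ to coincide as polynomials. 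Combining these facts on a shared facet, where membership $(k,l)\in\Lambda_{i,j}(D_\Gamma)$ guarantees by \eqref{eq:lambdaij} that $\zeta^k(D_i) = \zeta^l(D_j)$, yields
\[
P_i(x) = P_i|_k(x) = P_j|_l(x) = P_j(x) \quad \text{for all } x \in \zeta^k(D_i) = \zeta^l(D_j),
\]
which is precisely the required agreement across each common facet.

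The remaining step is to upgrade agreement across shared facets to agreement on the full overlaps $D_i \cap D_j$ demanded by the gluing lemma. Since $\mathrm{int}(D_i) \cap \mathrm{int}(D_j) = \emptyset$, each such overlap lies in $\partial D_i \cap \partial D_j$ and is a common face of the two convex polytopes. If that face is a facet, we are done by the displayed equality; if it is a lower-dimensional face (an edge, the shared vertex at the origin, etc., possibly common to three or more sub-polytopes), it lies in the closure of the shared facets linking $D_i$ and $D_j$, and the equality $P_i = P_j$ there follows by continuity of the two polynomials from their agreement on those facets. I expect this last piece of topological bookkeeping --- propagating facet-wise agreement to the lower-dimensional faces where several sub-polytopes meet --- to be the main obstacle, since it is the one place where the combinatorial structure of the $D$-decomposition, rather than the algebra of the Handelman basis, must be brought to bear.
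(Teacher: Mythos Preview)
Your proposal is correct and follows essentially the same route as the paper: translate the hypothesis $J_i(b_i,d,k)=J_j(b_j,d,l)$ into equality of $P_i|_k$ and $P_j|_l$ as polynomials, observe that $P_i=P_i|_k$ on $\zeta^k(D_i)$, deduce $P_i=P_j$ on each shared facet, and conclude continuity by gluing. You are in fact more careful than the paper on the last step---the paper asserts $P_i(x)=P_j(x)$ for all $x\in D_i\cap D_j$ directly from the facet agreement without separately addressing the lower-dimensional overlaps you flag as the main obstacle.
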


\begin{proof}
  From \eqref{eq:Jref},  $J_{i}(b_i,d,k)$ is the vector of coefficients of $P_i |_k(x)$ after expansion. Therefore, if $J_{i}(b_i,d,k) = J_{j}(b_j,d,l)$ for all $i,j\in \{1,\cdots,L\},\;i\neq j\text{ and }(k,l)\in\Lambda_{i,j}(D_\Gamma) $, then \begin{align}\nonumber P_i |_k(x) = &P_j |_l(x)\text{ for all }i,j\in \{1,\cdots,L\},\;i\neq j\text{ and }\\\label{eq:PiPj}&\qquad\qquad\qquad\qquad\qquad(k,l)\in\Lambda_{i,j}(D_\Gamma).
\end{align}
On the other hand, from definition \ref{df:rstr_poly}, it follows that for any $i\in\{1,\cdots ,L\}$ and $k\in\{1,\cdots,m_i\}$, \vspace{-0.05in} \begin{align}\label{eq:restr_equal}
P_i|_k(x) = P_i(x)\text{ for all }x\in\zeta^k(D_i). \vspace{-0.05in}
\end{align}
Furthermore, from the definition of $\Lambda_{i,j}(D_\Gamma)$, we know that
 \begin{equation}\label{eq:interface}
\zeta^k(D_i)= \zeta^l(D_j)\subset D_i\cap D_j\end{equation} for any $i,j\in\{1\cdots,L\},\;i\neq j$ and any $(k,l)\in\Lambda_{i,j}(D_\Gamma)$.
 Thus, from \eqref{eq:PiPj}, \eqref{eq:restr_equal} and \eqref{eq:interface}, it follows that for any $i,j\in\{1,\cdots,L\},\;i\neq j$, we have $P_i(x)=P_j(x)$ for all $x\in D_i\cap D_j$ . Since for each $i\in\{1,\cdots,L\}$, $P_i(x)$ is continuous on $D_i$ and for any $i,j\in\{1\cdots,L\},\;i\neq j$, $P_i(x) = P_j(x)$ for all $x\in D_i\cap D_j$, we conclude that the piecewise polynomial function
 \[P(x) = P_i(x)\quad x\in D_i,i=1,\cdots,L\]
 is continuous for all $x\in \Gamma$.
\end{proof}
\begin{thm}(Main Result)
\label{thm:main_theorem}
Let $d_f$ be the degree of the polynomial vector field $f(x)$ of System~\eqref{eq:system}. Given $w_i,\;h_{i,j}\in\mathbb{R}^n$ and $u_i,\;g_{i,j}\in\mathbb{R}$, define the polytope
\[
\Gamma := \{ x \in \mathbb{R}^n: w_i^T x+u_i \geq 0, i=1, \cdots, K\},
\]
with D-decomposition $D_{\Gamma} := \{ D_i \}_{i=1,\cdots,L}$, where
\[
D_i := \{ x \in \mathbb{R}^n: h_{i,j}^T x + g_{i,j} \geq 0, j=1, \cdots, m_i \}.
\]
 Let $N_i$ be the cardinality of $E_{d,m_i}$, as defined in \eqref{eq:E_d} and let $M_i$ be the cardinality of $E_{d+d_f-1,m_i}$. Consider the maps $R_i$, $H_i$, $F_i$, $G_i$, and $J_i$ as defined in definition \ref{df:maps}, and $\Lambda_{i,j}(D_\Gamma)$ as defined in \eqref{eq:lambdaij} for $i,j\in\{1,\cdots,L\}$. If there exists $d\in\mathbb{N}$ such that $\max \gamma$ in the linear program (LP),
\begin{align}
&\max_{\gamma \in \mathbb{R} , b_i \in \mathbb{R}^{N_i}, c_i \in \mathbb{R}^{M_i}} \;\;\; \gamma  \nonumber  \\
\nonumber&  \text{subject to } \;\;
\\& b_{i} \geq \mathbf{0}   &&\hspace{-.3cm} \text{ for } i=1, \cdots, L \nonumber \\
&  c_{i} \leq \mathbf{0}           && \hspace{-.3cm}\text{ for } i=1, \cdots, L \nonumber  \\
&  R_i(b_{i},d) = \mathbf{0}       && \hspace{-.3cm}\text{ for } i=1, \cdots, L        \nonumber \\
&  H_i(b_i,d) \geq \mathbf{1}      && \hspace{-.3cm} \text{ for } i=1, \cdots, L \nonumber \\
&  H_i(c_i,d+d_f-1) \leq -\gamma \cdot \mathbf{1}  && \hspace{-.3cm}\text{ for } i=1, \cdots, L \nonumber \\
& G_i(b_i,d) = F_i(c_i,d+d_f-1)   &&\hspace{-.3cm}\text{ for } i=1, \cdots, L \nonumber \\
&  \nonumber J_{i}(b_i,d,k) = J_{j}(b_j,d,l)  && \hspace{-.3cm} \text{ for } i,j=1, \cdots, L \text{ and } \\
&  && \qquad\quad k,l \in \Lambda_{i,j}(D_\Gamma)\label{eq:LP}
\end{align}
is positive, then the origin is an asymptotically stable equilibrium for System~\ref{eq:system}. Furthermore, \vspace{-0.05in}
\begin{small}
\begin{equation*}
V(x) = V_i(x) = \hspace{-0.05in} \sum_{\alpha\in E_{d,m_i}} \hspace{-0.05in} b_{i,\alpha} \prod_{j=1}^{m_i} (h^T_{i,j}x+g_{i,j})^{\alpha_j} \text{ for } x \in D_i, \, i=1, \cdots,L \vspace{-0.05in}
\end{equation*}
\end{small}
with $b_{i,\alpha}$ as the elements of $b_i$, is a piecewise polynomial Lyapunov function proving stability of System~\eqref{eq:system}.
\end{thm}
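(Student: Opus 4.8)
The plan is to verify that the candidate function $V$, defined piecewise by $V=V_i$ on $D_i$, satisfies the hypotheses of the Lyapunov condition in Theorem~\ref{thm:lyap} on $\Omega=\Gamma$; asymptotic stability of the origin then follows at once. Concretely, I must produce continuous positive definite functions $W_1,W_2,W_3$ with $W_1\le V\le W_2$ and $D^+(V,f)\le -W_3$ on $\Gamma$, having first checked that $V$ is continuous and that $V(0)=0$. The strategy is to read off each of these properties from one block of the constraints in the linear program~\eqref{eq:LP}.

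First I would dispatch the structural properties. Continuity of $V$ on $\Gamma$ follows directly from Lemma~\ref{Continuity Lemma}: the equality constraints $J_i(b_i,d,k)=J_j(b_j,d,l)$ for $(k,l)\in\Lambda_{i,j}(D_\Gamma)$ are exactly its hypothesis, forcing the facet restrictions $P_i|_k$ and $P_j|_l$ to agree on every shared interface. The condition $V(0)=0$ follows from Lemma~\ref{Zero Origin}, since $R_i(b_i,d)=\mathbf 0$ annihilates every Handelman monomial that is nonzero at the origin. Nonnegativity $V_i(x)\ge 0$ on $D_i$ is immediate from $b_i\ge\mathbf 0$ together with the fact that each basis element $\prod_{j}(h_{i,j}^Tx+g_{i,j})^{\alpha_j}$ is nonnegative on $D_i$, which is the constructive half of Handelman's theorem (Theorem~\ref{thm:Handelman}).

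Next I would handle the derivative along trajectories. On each cell introduce the Handelman polynomial $Q_i(x)=\sum_{\alpha\in E_{d+d_f-1,m_i}}c_{i,\alpha}\prod_{j}(h_{i,j}^Tx+g_{i,j})^{\alpha_j}$. Because $G_i(b_i,d)$ and $F_i(c_i,d+d_f-1)$ are the monomial-coefficient vectors of $\langle\nabla P_i,f\rangle$ and of $Q_i$ respectively, the constraint $G_i(b_i,d)=F_i(c_i,d+d_f-1)$ forces the polynomial identity $\langle\nabla V_i,f\rangle=Q_i$ on $D_i$. Since $c_i\le\mathbf 0$ and the basis elements are nonnegative on $D_i$, this gives $\langle\nabla V_i,f\rangle\le 0$ on $D_i$. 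The strict definiteness bounds come from the square-term constraints: $H_i(b_i,d)\ge\mathbf 1$ forces each coefficient of $x_\ell^2$ in $V_i$ to be at least $1$, while $H_i(c_i,d+d_f-1)\le-\gamma\mathbf 1$ with $\gamma>0$ forces each coefficient of $x_\ell^2$ in $Q_i$ to be at most $-\gamma$; combined with $R_i=\mathbf 0$ (no constant term) these let me bound $V$ and $\langle\nabla V_i,f\rangle$ below/above by definite quadratic-type expressions and hence build the class-$\mathcal K$ functions $W_1,W_3$, with compactness of $\Gamma$ supplying the upper bound $W_2$.

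The main obstacle is the passage from the cellwise gradient inequality to the global Dini-derivative inequality $D^+(V,f)\le -W_3$ at the interfaces, where $V$ is continuous but not differentiable. Here I would argue straight from Definition~\ref{df:Dini}: for fixed $x$ on an interface and small $h>0$, the point $x+hf(x)$ lies in one of the finitely many cells $D_k$ meeting $x$, and along any sequence $h\to 0^+$ with $x+hf(x)\in D_k$ continuity of $V$ gives $\big(V(x+hf(x))-V(x)\big)/h\to\langle\nabla V_k(x),f(x)\rangle=Q_k(x)$. Taking the $\limsup$ over the finitely many cells visited yields $D^+(V,f)(x)\le\max_k Q_k(x)\le -W_3(x)$, since every adjacent cell obeys the same bound. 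A secondary point, which I expect to need the most care, is making the step from the purely algebraic square-term coefficient bounds to honest positive definite functions fully rigorous; this is where cellwise nonnegativity and the vertex-at-origin structure of the $D$-decomposition must be used together. With continuity, $V(0)=0$, and the definite bounds established, Theorem~\ref{thm:lyap} applies and the origin is asymptotically stable, certified by the piecewise-polynomial Lyapunov function $V$.
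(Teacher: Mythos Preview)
Your proposal is correct and follows essentially the same route as the paper: continuity via Lemma~\ref{Continuity Lemma}, $V(0)=0$ via Lemma~\ref{Zero Origin}, cellwise nonnegativity from $b_i\ge\mathbf 0$, the identity $\langle\nabla V_i,f\rangle=Q_i$ from $G_i=F_i$, and the definite bounds $V_i\ge x^Tx$ and $\langle\nabla V_i,f\rangle\le-\gamma\,x^Tx$ from the $H_i$ constraints combined with the Handelman sign conditions, followed by an appeal to Theorem~\ref{thm:lyap}. Your discussion of the Dini derivative at cell interfaces and your flagging of the step from square-term coefficient bounds to genuine positive-definite comparison functions are in fact more careful than the paper's own proof, which simply asserts $D^+(V_i,f)=\langle\nabla V_i,f\rangle$ on $D_i$ and that $H_i(b_i,d)\ge\mathbf 1$ together with $V_i\ge 0$ yields $V_i\ge x^Tx$ without further argument.
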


\vspace{0.1in}
\begin{proof}
 Let us choose \vspace{-0.05in}
\begin{small}
\begin{equation*}
V(x) = V_i(x) = \hspace{-0.05in} \sum_{\alpha\in E_{d,m_i}} \hspace{-0.05in} b_{i,\alpha} \prod_{j=1}^{m_i} (h^T_{i,j}x+g_{i,j})^{\alpha_j} \text{ for } x \in D_i, \, i=1, \cdots,L
\end{equation*}
\end{small}
In order to show that $V(x)$ is a Lyapunov function for system~\ref{eq:system}, we need to prove the following:
\begin{enumerate}
\item  $V_i(x)\geq x^Tx$ for all $x\in D_i,\;i=1,\cdots,L$,
\item  $D^+(V_i(x),f(x))\leq -\gamma \,x^Tx$ for all $x\in D_i,\;i=1,\cdots,L$ and for some $\gamma>0$,
\item $V(0) = 0$,
\item $V(x)$ is continuous on $\Gamma$.
\end{enumerate}
Then, by Theorem~\ref{thm:lyap}, it follows that System~\eqref{eq:system} is asymptotically stable at the origin. Now, let us prove items (1)-(4).
For some $d\in\mathbb{N}$, suppose $\gamma>0,\;b_i\text{ and }c_i\text{ for }i=1,\cdots,L$ is a solution to linear program \eqref{eq:LP}.

\noindent\textbf{Item 1.} First, we show that $V_i(x)\geq x^Tx$ for all $x\in D_i,\;i=1,\cdots,L$. From the definition of the D-decomposition in the theorem statement, $h_{i,j}^Tx+g_{i,j}\geq 0$,  for all $x\in D_i$, $j=1,\cdots,m_i$. Furthermore, $b_i\geq \mathbf{0}$. Thus,
\begin{equation}\label{eq:Vipos}
V_i(x) := \sum_{\alpha\in E_{d,m_i}} b_{i,\alpha} \prod_{j=1}^{m_i} (h^T_{i,j}x+g_{i,j})^{\alpha_j}\geq 0
\end{equation}
 for all $x\in D_i\backslash,\;i=1,\cdots,L$. From \eqref{eq:Hdef}, $H_i(b_i,d)\geq \mathbf{1}$ for each $i=1,\cdots,L$ implies that all the coefficients of the expansion of $x^Tx$ in $V_i(x)$ are greater than $1$ for $i=1,\cdots,L$. This, together with \eqref{eq:Vipos}, prove that $V_i(x)\geq x^Tx$ for all $x\in D_i,\;i=1,\cdots,L$.

\noindent\textbf{Item 2.} Next, we show that $D^+(V_i(x),f(x))\leq -\gamma x^Tx $ for all $x\in D_i,\;i=1,\cdots,L$. For $i=1,\cdots,L$, let us refer the elements of $c_i$ as $c_{i,\beta}$, where $\beta\in E_{d+d_f-1,m_i}$ . From \eqref{eq:LP}, $c_{i}\leq \mathbf{0}$ for $i=1,\cdots,L$. Furthermore, since  $h_{i,j}^Tx+g_{i,j}\geq 0$ for all $x\in D_i$, it follows that \vspace{-0.1in}
\begin{equation}\label{eq:Zineg}
Z_i(x) = \sum_{\beta \in E_{d+d_f-1}} c_{\beta,i} \prod_{j=1}^{m_i}  (h_{i,j}^Tx+g_{i,j})^{\beta_j}\leq 0 \vspace{-0.1in}
\end{equation}
 for all $x\in D_i,\;i=1,\cdots,L$. From \eqref{eq:Hdef}, $H_i(c_i,d+d_f-1)\leq -\gamma\cdot\mathbf{1}$ for $i=1,\cdots,L$ implies that all the coefficients of the expansion of $x^Tx$ in $Z_i(x)$ are less than $-\gamma$ for $i=1,\cdots,L$. This, together with \eqref{eq:Zineg}, prove that $Z_i(x)\leq -\gamma x^Tx$ for all $x\in D_i$, for $i=1,\cdots,L$. Lastly, by the definitions of the maps $G_i$ and $F_i$ in \eqref{eq:Gdef} and \eqref{eq:Fdef}, if $G_i(b_i,d) = F_i(c_i,d+d_f-1)$, then $\langle \nabla V_i(x),f(x)\rangle = Z_i(x)\leq -\gamma x^Tx$ for all $x\in D_i$ and $i\in\{1\cdots,L\}$.
 Since $D^+(V_i(x),f(x)) = \langle\nabla V_i(x),f(x)\rangle \text{ for all } x\in D_i$, it follows that $D^+(V_i(x),f(x))\leq -\gamma x^Tx $ for all $x\in D_i,\;i\in\{1\cdots,L\}$.

\noindent\textbf{Item 3.} Now, we show that $V(0) = 0$. By Lemma~\ref{Zero Origin}, $R_i(b_{i},d) = \mathbf{0}$ implies $V_i(0)=0$ for each $i\in\{1,\cdots,L\}$.

\noindent\textbf{Item 4.} Finally, we show that $V(x)$ is continuous for $x\in \Gamma$. By Lemma~\ref{Continuity Lemma}, $J_{i}(b_i,d,k) = J_{j}(b_j,d,l)$ for all $i,j\in\{1,\cdots,L\}$, $k,l\in\Lambda_{i,j}(D_\Gamma)$ implies that $V(x)$ is continuous for all $x\in\Gamma$.
\end{proof}

Using Theorem~\ref{thm:main_theorem}, we define Algorithm 1 to search for piecewise-polynomial Lyapunov functions to verify local stability of system \eqref{eq:system} on convex polytopes. We have provided a Matlab implementation for Algorithm 1 at: $\mathtt{www.sites.google.com/a/asu.edu/kamyar/Software}$. \vspace{-0.1in}

\begin{algorithm}

\begin{footnotesize}
\textbf{Inputs:}
\begin{itemize}
\item Vertices of the polytope: $p_i$ for $i=1, \cdots,K$
\item $h_{i,j}$ and $g_{i,j}$ for $i=1,\cdots,K$ and $j=1,\cdots,m_i$
\item Coefficients and degree of the polynomial vector field of ~\eqref{eq:system}
\item Maximum degree of the Lyapunov function: $d_{max}$
\end{itemize}

%

\While{ $d < d_{\text{max}}$}{	
  	\eIf{the LP defined in \eqref{eq:LP} is feasible}{		
  		Break the while loop \\		
 	}{
 	 	Set $d=d+1$\\	
 	}
}

\textbf{Outputs:}\\
\begin{itemize}
\item In case the LP in \eqref{eq:LP} is feasible then the output is the coefficients $b_{i,\alpha}$
of the Lyapunov function
\vspace{-0.1in}
\[
V(x)=V_i(x) = \hspace{-.2cm}\sum_{\alpha\in E_{d,m_i}} \hspace{-.1cm}b_{i,\alpha} \prod_{j=1}^{m_i} (h^T_{i,j}x+g_{i,j})^{\alpha_j} \text{ for } x \in D_i,\;i=1, \cdots,L
\]
\end{itemize}
\caption{Search for piecewise polynomial Lyapunov functions}
\end{footnotesize} \vspace{-0.1in}
\label{algorithm}
\end{algorithm}\vspace*{-0.2in}


\section{Complexity Analysis}
\label{sec:results}
In this section, we analyze and compare the complexity of the LP in \eqref{eq:LP} with the complexity of the SDPs associated with Polya's algorithm in~\cite{kamyar2013nonlinear} and an SOS approach using Positivstellensatz multipliers. For simplicity, we consider Lyapunov functions defined on a hypercube centered at the origin. Note that we make frequent use of the formula \vspace{-0.1in}
\[
N_{vars} := \sum_{i=0}^d \dfrac{(i+K-1)!}{i!(K-1)!}, \vspace{-0.1in}
\]
which gives the number of basis functions in $\Theta_d(\Gamma)$ for a convex polytope $\Gamma$ with $K$ facets.

\subsection{Complexity of the LP associated with Handelman's Representation}
We consider the following $D-$decomposition.
\begin{myass}
We perform the analysis on an $n-$dimensional hypercube, centered at the origin. The hypercube is decomposed into $L=2n$ sub-polytopes such that the $i$-th sub-polytope has $m=2n-1$ facets. Fig.~\ref{fig:polytope_complexity} shows the $1-$, $2-$ and $3-$dimensional decomposed hypercube.
\label{assum_decompose}
\end{myass}

\begin{figure}
\centering
\includegraphics[scale=1.6]{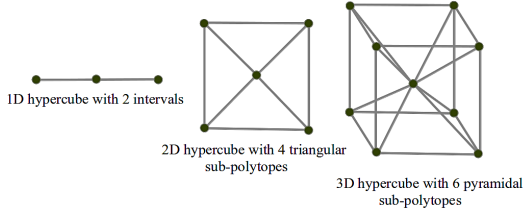} \vspace*{-0.15in}
\caption{Decomposition of the hypercube in $1-$,$2-$ and $3-$dimensions}
\label{fig:polytope_complexity} \vspace*{-0.3in}
\end{figure}

Let $n$ be the number of states in System~\eqref{eq:system}. Let $d_f$ be the degree of the polynomial vector field in System~\eqref{eq:system}. Suppose we use Algorithm 1 to search for a Lyapunov function of degree $d_V$. Then, the number of decision variables in the LP is \vspace{-0.1in}
\begin{small}
\begin{align}
N^H_{vars} \hspace*{-0.04in}  = L \left(  \sum_{d=0}^{d_V} \dfrac{(d+m-1)!}{d!(m-1)!} + \hspace*{-0.1in} \sum_{d=0}^{d_V+d_f-1} \hspace*{-0.05in} \dfrac{(d+m-1)!}{d!(m-1)!} -(d_V+1) \hspace*{-0.05in}  \right)
\label{eq:Nvar_H_general}
\end{align}
\end{small}
where the first term is the number of $b_{i,\alpha}$ coefficients, the second term is the number of $c_{i,\beta}$ coefficients and the third term is the dimension of $R_i(b_i,d)$ in \eqref{eq:LP}. By substituting for $L$ and $m$ in~\eqref{eq:Nvar_H_general}, from Assumption~\ref{assum_decompose} we have 
\[
N^H_{vars}=2n \hspace*{-0.03in} \left( \sum_{d=0}^{d_V} \dfrac{(d+2n-2)!}{d!(2n-2)!} + \hspace*{-0.1in} \sum_{d=0}^{d_V+d_f-1} \hspace*{-0.05in} \dfrac{(d+2n-2)!}{d!(2n-2)!}-d_V-1 \hspace*{-0.05in} \right) \hspace*{-0.03in}.
\]
Then, for large number of states, i.e., large $n$,
\[
N^H_{vars} \sim 2n \left( (2n-2)^{d_V} + (2n-2)^{d_V+d_f-1} \right) \sim n^{d_V+d_f}.
\]
Meanwhile, the number of constraints in the LP is
\begin{equation}
\label{eq:Ncons_H_general}
N^H_{cons} = N^H_{vars} + L \left( \sum_{d=0}^{d_V} \dfrac{(d+n-1)!}{d!(n-1)!} + \sum_{d=0}^{d_V+d_f-1} \dfrac{(d+n-1)!}{d!(n-1)!} \right),
\end{equation}
where the first term is the total number of inequality constraints associated with the positivity of $b_{i}$ and negativity of $c_{i}$, the second term is the number of equality constraints on the coefficients of the Lyapunov function required to ensure continuity ($J_{i}(b_i,d,k) = J_j(b_j,d,l)$ in the LP \eqref{eq:LP}) and the third term is the number of equality constraints associated with negativity of the Lie derivative of the Lyapunov function ($G_i(b_i,d) = F_i(c_i,d+d_f-1)$ in the LP~\eqref{eq:LP}).
By substituting for $L$ in~\eqref{eq:Ncons_H_general}, from Assumption~\ref{assum_decompose} for large $n$ we get
\[
N^H_{cons} \sim n^{d_V+d_f} + 2n(n^{d_V}+n^{d_V+d_f-1}) \sim n^{d_V+d_f}.
\]
The complexity of an LP using interior-point algorithms is approximately $O(N_{vars}^2 N_{cons})$~\cite{boydconvex}. Therefore the computational cost of solving the LP~\eqref{eq:LP} is
\[
\sim n^{3(d_V+d_f)}.
\]

\subsection{Complexity of the SDP associated with Polya's algorithm}
Before giving our analysis, we briefly review Polya's algorithm~\cite{kamyar2012hypercube} as applied to positivity of a polynomial on the hypercube. First, given a polynomial $T(x)$, for every variable $x_i \in [l_i,u_i]$, we define an auxiliary variable $y_i$ such that the pair $(x_i,y_i)$ lies on the simplex. Then, by using the procedure in~\cite{kamyar2012hypercube}, we construct a homogeneous version of $T$, defined as $\tilde{T}(x,y)$ so that $\tilde{T}(x,y)=T(x)$ for $(x_i,y_i)\in \Delta_i$. Finally, if for some $e \geq 0$ (Polya's exponent) the coefficients of $(x_1+y_1+ \cdots + x_n+y_n)^e\tilde{T}(x,y)$ are positive, then $T(x)$ is positive on the hypercube $[l_1,u_1] \times \cdots \times [l_n,u_n]$.

In~\cite{kamyar2013nonlinear}, we used this approach to construct Lyapunov functions defined on the hypercube. This algorithm used semidefinite programming to search for the coefficients of a matrix-valued polynomial $P(x)$ which defined a Lyapunov function as $V(x) = x^T P(x) x$. In~\cite{kamyar2013nonlinear}, we determined that the number of decision variables in the associated SDP was
\[
N^P_{vars} = \dfrac{n(n+1)}{2} \sum_{d=0}^{d_V-2}\dfrac{(d+n-1)!}{d! (n-1)!}.
\]
The number of constraints in the SDP was
\[
N^P_{cons} =  \dfrac{n(n+1)}{2} \left( (d_V+e-1)^n + (d_V+d_f+e-2)^n \right),
\]
where $e$ is Polya's exponent mentioned earlier. Then, for large $n$, 
$
N^P_{vars} \sim n^{d_V}$ and $N^P_{cons} \sim (d_V+d_f+e-2)^n.
$
Since solving an SDP with an interior-point algorithm typically requires $ O(N_{cons}^3+N_{var}^3 N_{cons} + N_{var}^2 N_{cons}^2 )$ operations~\cite{boydconvex}, the computational cost of solving the SDP associated with Polya's algorithm is estimated as
\[
\sim (d_V+d_f+e-2)^{3n}.
\]

\subsection{Complexity of the SDP associated with SOS algorithm}
To find a Lyapunov function for~\eqref{eq:system} over the polytope \vspace*{-0.05in}
\[
\Gamma= \left\{ x \in \mathbb{R}^n : w_i^Tx+u_i \geq 0, i\in \{1, \cdots, K \} \right\}
\]
using the SOS approach with Positivstellensatz multipliers~\cite{stengle}, we search for a polynomial $V(x)$ and SOS polynomials $s_i(x)$ and $t_i(x)$ such that for any $\epsilon > 0$ \vspace{-0.05in}
\[
V(x)- \epsilon x^Tx - \sum_{i=1}^K s_i(x) (w_i^Tx+u_i) \text{ is SOS} \quad \text{and} \vspace*{-0.2in}
\]
\[
- \langle \nabla V(x),f(x) \rangle - \epsilon x^T x -\sum_{i=1}^K t_i(x) (w_i^Tx+u_i) \text{ is SOS}. \vspace{-0.05in}
\]
Suppose we choose the degree of the $s_i(x)$ to be $d_V-2$ and the degree of the $t_i(x)$ to be $d_V+d_f-2$. Then, it can be shown that the total number of decision variables in the SDP associated with the SOS approach is
\begin{small}
\begin{equation}
N^S_{vars} = \dfrac{N_1(N_1+1)}{2} + K \dfrac{N_2(N_2+1)}{2} + K \dfrac{N_3(N_3+1)}{2},
\label{eq:NSvar}
\end{equation}
\end{small}
where $N_1$ is the number of monomials in a polynomial of degree $d_V/2$ , $N_2$ is the number of monomials in a polynomial of degree $(d_V-2)/2$ and $N_3$ is the number of monomials in a polynomial of degree $(d_V+d_f-2)/2$ calculated as \vspace*{-0.05in}
\begin{small}
\[
N_1= \hspace*{-0.05in} \sum_{d=1}^{d_V/2} \dfrac{(d+n-1)!}{(d)!(n-1)!}, \vspace*{-0.1in}
\]
\[
N_2= \hspace*{-0.05in} \sum_{d=0}^{(d_V-2)/2} \dfrac{(d+n-1)!}{(d)!(n-1)!} \quad \text{and} \quad
N_3= \hspace*{-0.1in} \sum_{d=0}^{(d_V+d_f-2)/2} \dfrac{(d+n-1)!}{(d)!(n-1)!}.
\]
\end{small}
The first terms in~\eqref{eq:NSvar} is the number of scalar decision variables associated with the polynomial $V(x)$. The second and third terms are the number of scalar variables in the polynomials $s_i$ and $t_i$, respectively. It can be shown that the number of constraints in the SDP is
\begin{equation}
\label{eq:NScons}
N^S_{cons} = N_1 + K \, N_2 + K \, N_3 + N_4,
\end{equation}
where \vspace{-0.1in}
\[
N_4 = \sum_{d=0}^{(d_V+d_f)/2} \dfrac{(d+n-1)!}{(d)!(n-1)!}.
\]
The first term in~\eqref{eq:NScons} is the number of constraints associated with  positivity of $V(x)$, the second and third terms are the number of constraints associated with positivity of the polynomials $s_i$ and $t_i$, respectively. The fourth term is the number of constraints associated with negativity of the Lie derivative. By substituting $K=2n$ (For the case of a hypercube), for large $n$ we have
$
N^S_{vars} \sim N_3^2 \sim n^{d_V+d_f-1} \text{ and}
$
\[
\quad N^S_{cons} \sim KN_3+N_4 \sim n \, N_3+N_4 \sim n^{ 0.5(d_V+d_f)}.
\]
Finally, using an interior-point algorithm with complexity $O(N_{cons}^3+N_{var}^3 N_{cons} + N_{var}^2 N_{cons}^2 )$ to solve the SDP associated the SOS algorithm requires
$
\sim n^{3.5(d_V+d_f)-3}
$
operations. As an additional comparison, we also considered the SOS algorithm for global stability analysis, which does not use Positivstellensatz multipliers. For a large number of states, we have
$
N^S_{vars} \sim n^{ 0.5d_V}  \quad \text{and} \quad N^S_{cons} \sim n^{ 0.5(d_V+d_f)}.
$
In this case, the complexity of the SDP is \vspace{-0.05in}
\[
\sim n^{1.5(d_V+d_f)}+ n^{2d_V+d_f}. \vspace{-0.05in}
\]

\subsection{Comparison of the Complexities}
We draw the following conclusions from our complexity analysis.

\noindent \textbf{1.}  For large number of states, the complexity of the LP \eqref{eq:LP} and the SDP associated with SOS are both \textbf{polynomial} in the number of states, whereas the complexity of the SDP associated with Polya's algorithm grows \textbf{exponentially} in the number of states. For a large number of states and large degree of the Lyapunov polynomial, the LP has the least computational complexity.

\noindent \textbf{2.} The complexity of the LP \eqref{eq:LP} scales linearly with the number of sub-polytopes $L$.

\noindent \textbf{3.} In Fig.~\ref{fig:complexity_comparison}, we show the number of decision variables and constraints for the LP and SDPs using different degrees of the Lyapunov function and different degrees of the vector field. The figure shows that in general, the SDP associated with Polya's algorithm has the least number of variables and the greatest number of constraints, whereas the SDP associated with SOS has the greatest number of variables and the least number of constraints.

\begin{figure}[ht]
\includegraphics[scale=0.23]{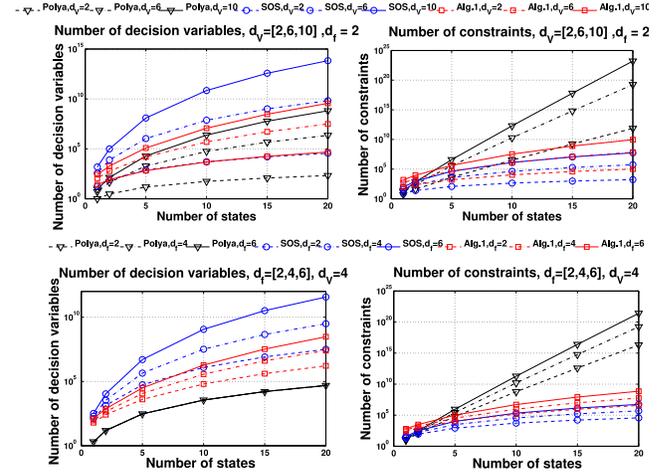}
\caption{Number of decision variables and constraints of the optimization problems associated with Algorithm 1, Polya's algorithm and SOS algorithm for different degrees of the Lyapunov function and the vector field $f(x)$}
\label{fig:complexity_comparison} \vspace*{-0.2in}
\end{figure}


\section{Numerical Results}
\label{sec:results}
In this section, we test the accuracy of our algorithm in approximating the region of attraction of a locally-stable nonlinear system known as the reverse-time Van Der Pol oscillator. The system is defined as\vspace*{-0.05in}
\begin{equation}
\label{eq:vanderpol}
\dot{x}_1 = -x_2, \; \dot{x}_2 =  x_1 + x_2 (x_1^2 -1). \vspace*{-0.05in}
\end{equation}
We considered the following convex polytopes:
\begin{enumerate}
\item Parallelogram $\Gamma_{P_s}$,
$P_s:=\{sp_i\}_{i=1,\cdots,4}$, where
\begin{small}
\[
\hspace{-0.12in} p_1= \begin{bmatrix}
-1.31 \\
0.18
\end{bmatrix},
p_2=\begin{bmatrix}
0.56\\
1.92
\end{bmatrix},
p_3=\begin{bmatrix}
-0.56\\
-1.92
\end{bmatrix},
p_4=\begin{bmatrix}
1.31\\
-0.18
\end{bmatrix} \vspace*{-0.05in}
\]
\end{small}
\item Square $\Gamma_{Q_s}$, $Q_s:=\{sq_i\}_{i=1,\cdots,4}$, where
\begin{small}
\[
q_1= \begin{bmatrix}
-1 \\
1
\end{bmatrix},
q_2=\begin{bmatrix}
1\\
1
\end{bmatrix},
q_3=\begin{bmatrix}
1\\
-1
\end{bmatrix},
q_4=\begin{bmatrix}
-1\\
-1
\end{bmatrix} \vspace*{-0.05in}
\]
\end{small}
\item Diamond $\Gamma_{R_s}$, $R_s:=\{sr_i\}_{i=1,\cdots,4}$, where
\begin{small}
\[
 r_1= \begin{bmatrix}
-1.41 \\
0
\end{bmatrix},
r_2=\begin{bmatrix}
0\\
1.41
\end{bmatrix},
r_3=\begin{bmatrix}
1.41\\
0
\end{bmatrix},
r_4=\begin{bmatrix}
0\\
-1.41
\end{bmatrix}
\]
\end{small}
\end{enumerate}
where $s\in\mathbb{R}_+$ is a scaling factor. We decompose the parallelogram and the diamond into 4 triangles and decompose the square into 4 squares. We solved the following optimization problem for Lyapunov functions of degree $d=2,4,6,8$: \vspace{-0.1in}
{\small
\begin{align*}
&  \max_{s \in \mathbb{R}^+} \; s \\
& \text{subject to} \quad \text{max }\gamma\text{ in LP \eqref{eq:LP} is positive, where}  \\
& \Gamma=\Gamma_{P_s} := \{ x \in \mathbb{R}^2 : x = \sum_{i=1}^4 \mu_i sp_i: \mu_i \geq 0 \text{ and } \sum_{i=1}^K \mu_i = 1\}.
\end{align*}
}
To solve this problem, we use a bisection search on $s$ in an outer-loop and an LP solver in the inner loop. Fig.~\ref{fig:quad_level_sets} illustrates the largest $\Gamma_{P_s}$, i.e. \vspace*{-0.05in}
\[
\Gamma_{P_{s^*}} := \{ x \in \mathbb{R}^n : x = \sum_{i=1}^4 \mu_is^* p_i: \mu_i\geq 0 \text{ and } \sum_{i=1}^4 \mu_i = 1 \} \vspace*{-0.05in}
\]
and the largest level-set of $V_i(x)$ inscribed in $\Gamma_{P_{s^*}}$, for different degrees of $V_i(x)$.
Similarly, we solved the same optimization problem replacing $\Gamma_{P_s}$ with the square $\Gamma_{Q_s}$ and diamond $\Gamma_{R_s}$. In all cases, increasing $d$ resulted in a larger maximum inscribed sub-level set of $V(x)$ (see Fig.~\ref{fig:diam_level_sets}). We obtained the best results using the parallelogram $\Gamma_{P_s}$ which achieved the scaling factor $s^*=1.639$.
The maximum scaling factor for $\Gamma_{Q_s}$ was $s^*=1.800$ and the maximum scaling factor for $\Gamma_{R_s}$ was $s^*=1.666$. \vspace{-0.15in}

\begin{figure}[htbp]
\centering
\includegraphics[scale=0.22]{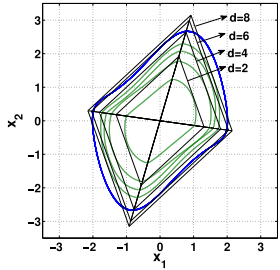} \vspace*{-0.1in}
\caption{Largest level sets of Lyapunov functions of different degrees and their associated parallelograms}
\label{fig:quad_level_sets} \vspace*{-0.3in}
\end{figure}

\begin{figure}[htbp]
\centering
\begin{subfigure}[Square polytopes]
{\includegraphics[width=0.54\columnwidth ]{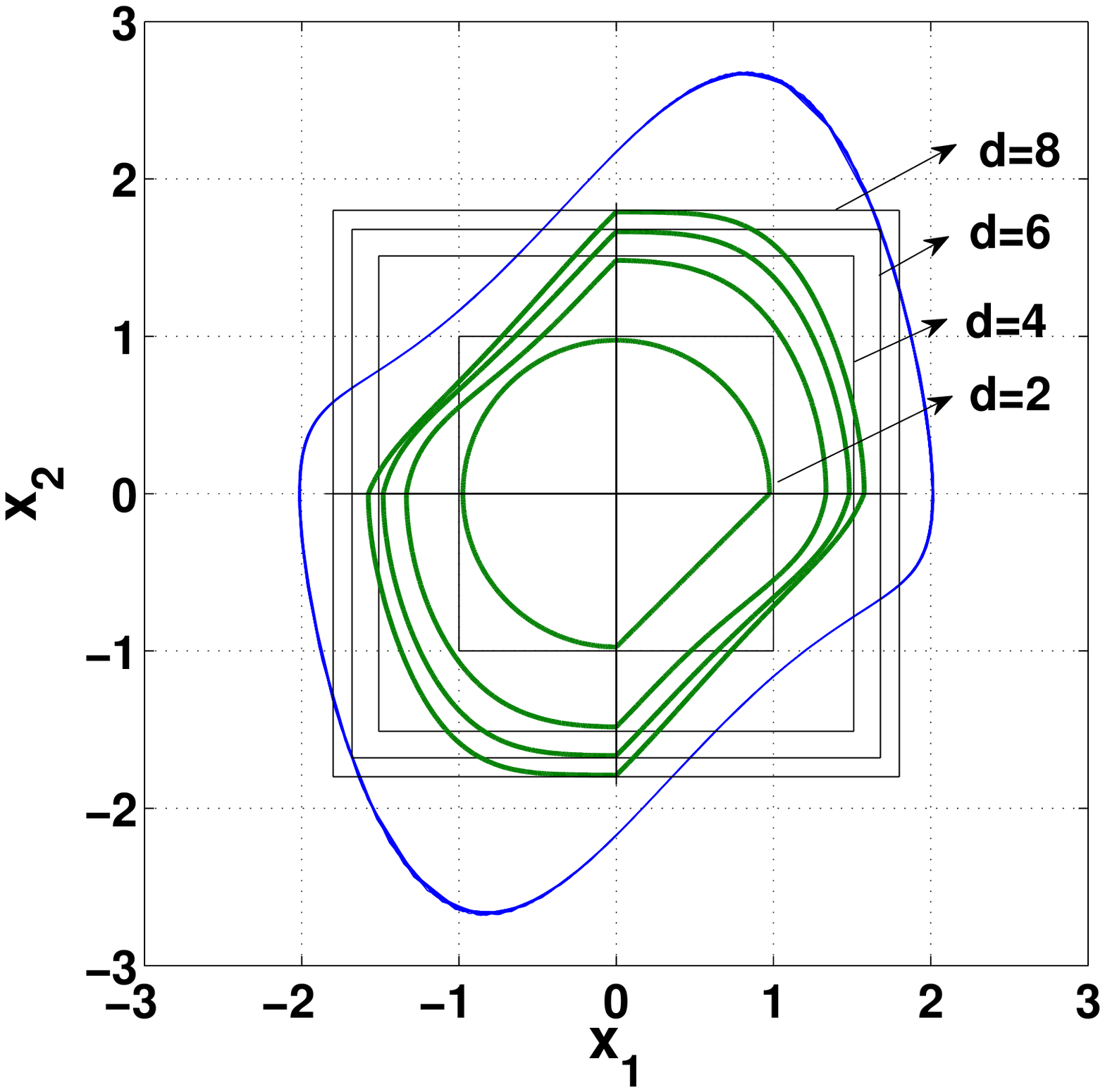}}
\end{subfigure}
~\hspace{-1.25cm}
\begin{subfigure}[{Diamond polytopes}]
{\includegraphics[width=0.54\columnwidth]{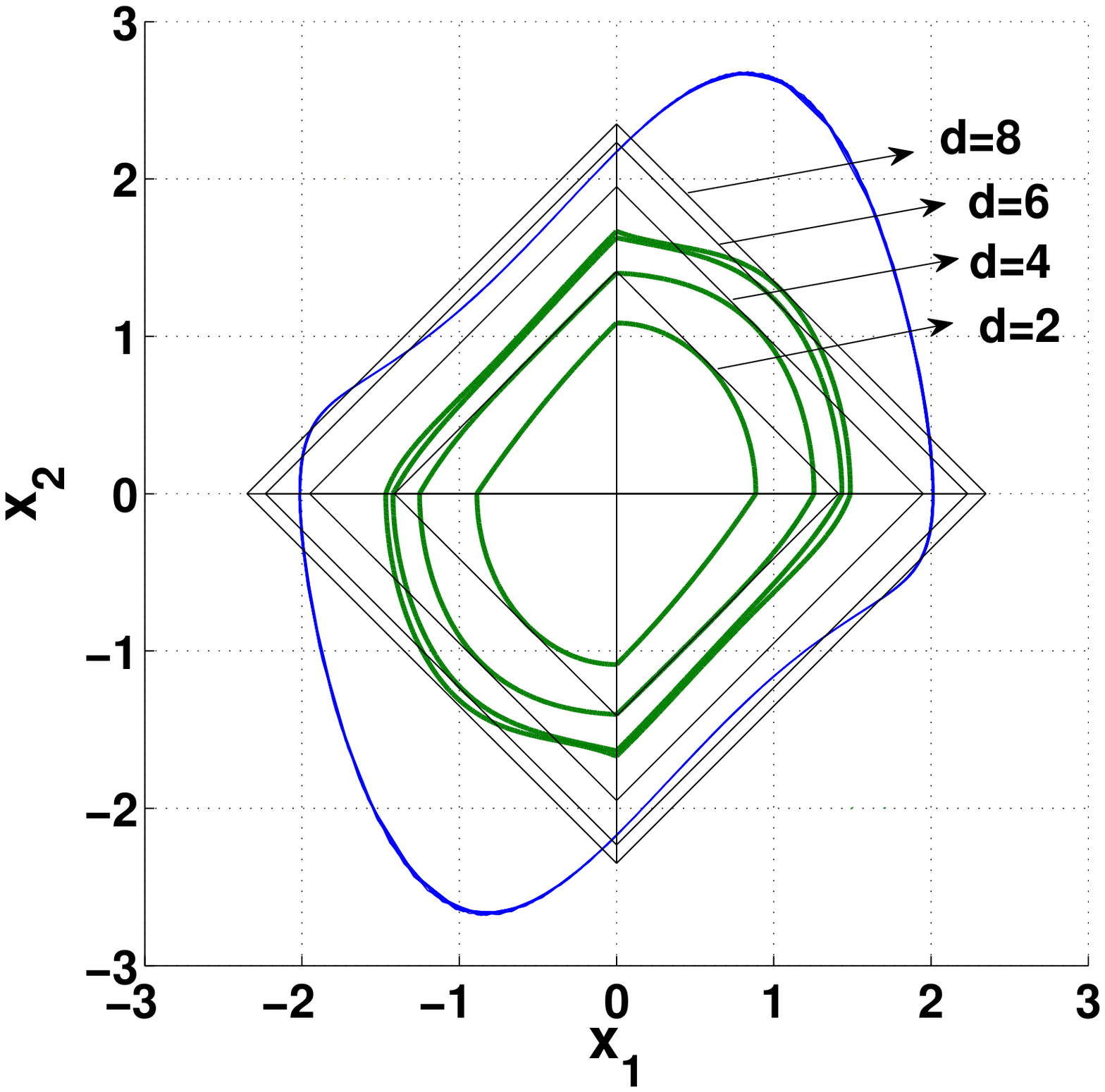}}
\end{subfigure}
 \vspace*{-0.15in}
 \caption{Largest level sets of  Lyapunov functions of different degrees and their associated polytopes}
\label{fig:diam_level_sets}  \vspace*{-0.1in}
\end{figure}


\section{Conclusion and future work}

In this paper, we propose an algorithm for stability analysis of nonlinear systems with polynomial vector fields.
The algorithm searches for piecewise polynomial Lyapunov functions defined on convex polytopes and represented in the Handelman basis. We show that the coefficients of the polynomial Lyapunov function can be obtained by solving a linear program. We also show that the resulting linear program has polynomial complexity in the number of states. We further improve the effectiveness of the algorithm by exploring the best polytopic domain for a given region of attraction. This work can also be potentially applied to stability analysis of switched systems and controller synthesis.


\section{Acknowledgements}
This material is based upon work supported by the Na-
tional Science Foundation under Grant Number 1301660.

\bibliographystyle{ieeetr}
\bibliography{Reza_Chai_CDC2014}

\begin{thebibliography}{10}

\bibitem{peet2012converse}
M.~M. Peet and A.~Papachristodoulou, ``A converse sum of squares {L}yapunov
  result with a degree bound,'' {\em IEEE Transactions on Automatic Control},
  vol.~57, no.~9, pp.~2281--2293, 2012.

\bibitem{Tarski}
A.~Tarski, ``A decision method for elementary algebra and geometry,'' {\em
  Random Corporation monograph, Berekley and Los Angeles}, 1951.

\bibitem{Artin}
E.~Artin, ``Uber die zerlegung definiter funktionen in quadra, quadrate,'' {\em
  Abh. Math. Sem. Univ. Hamburg}, vol.~5, pp.~85--99, 1927.

\bibitem{blum}
L.~Blum, {\em Complexity and real computation}.
\newblock Springer, 1998.

\bibitem{powers2011positive}
V.~Powers, ``Positive polynomials and sums of squares: Theory and practice,''
  {\em Real Algebraic Geometry}, p.~77, 2011.

\bibitem{papachristodoulou2013sostools}
A.~Papachristodoulou, J.~Anderson, G.~Valmorbida, S.~Prajna, P.~Seiler, and
  P.~Parrilo, ``{\uppercase{SOSTOOLS}} version 3.00 sum of squares optimization
  toolbox for matlab,'' {\em arXiv preprint arXiv:1310.4716}, 2013.

\bibitem{parrilo}
P.~A. Parrilo, {\em Structured semidefinite programs and semialgebraic geometry
  methods in robustness and optimization}.
\newblock PhD thesis, California Institute of Technology, 2000.

\bibitem{tan2008stability}
W.~Tan and A.~Packard, ``Stability region analysis using polynomial and
  composite polynomial lyapunov functions and sum-of-squares programming,''
  {\em IEEE Transactions on Automatic Control}, vol.~53, no.~2, pp.~565--570,
  2008.

\bibitem{prajna_hybrid}
S.~Prajna and A.~Papachristodoulou, ``Analysis of switched and hybrid
  systems-beyond piecewise quadratic methods,'' in {\em Proceedings of the 2003
  American Control Conference}, vol.~4, pp.~2779--2784, IEEE, 2003.

\bibitem{papachristodoulou2009analysis}
A.~Papachristodoulou, M.~M. Peet, and S.~Lall, ``Analysis of polynomial systems
  with time delays via the sum of squares decomposition.,'' {\em IEEE
  Transactions on Automatic Control}, vol.~54, no.~5, pp.~1058--1064, 2009.

\bibitem{polya_book}
G.~Hardy, J.~E. Littlewood, and G.~P{\'o}lya, {\em Inequalities}.
\newblock Cambridge University Press, 1934.

\bibitem{peres2007parameter}
R.~C. Oliveira and P.~L. Peres, ``Parameter-dependent lmis in robust analysis:
  characterization of homogeneous polynomially parameter-dependent solutions
  via lmi relaxations,'' {\em IEEE Transactions on Automatic Control}, vol.~52,
  no.~7, pp.~1334--1340, 2007.

\bibitem{kamyar2012hypercube}
R.~Kamyar and M.~Peet, ``Decentralized computation for robust stability of
  large-scale systems with parameters on the hypercube,'' in {\em Proceedings
  of the 2012 IEEE Conference on Decision and Control}, pp.~6259--6264, Dec
  2012.

\bibitem{castle2011polya}
M.~Castle, V.~Powers, and B.~Reznick, ``{P}{\'o}lya's theorem with zeros,''
  {\em Journal of Symbolic Computation}, vol.~46, no.~9, pp.~1039--1048, 2011.

\bibitem{de1996effective}
J.~A. de~Loera and F.~Santos, ``An effective version of {P}{\'o}lya's theorem
  on positive definite forms,'' {\em Journal of Pure and Applied Algebra},
  vol.~108, no.~3, pp.~231--240, 1996.

\bibitem{kamyar2012solving}
R.~Kamyar, M.~Peet, and Y.~Peet, ``Solving large-scale robust stability
  problems by exploiting the parallel structure of {P}olya's theorem,'' {\em
  IEEE Transactions on Automatic Control}, vol.~58, pp.~1931--1947, Aug 2013.

\bibitem{kamyar2013nonlinear}
R.~Kamyar and M.~M. Peet, ``Decentralized polya's algorithm for stability
  analysis of large-scale nonlinear systems,'' in {\em Proceedings of the 2013
  IEEE Conference on Decision and Control}, pp.~5858--5863, Dec 2013.

\bibitem{handelman}
D.~Handelman {\em et~al.}, ``Representing polynomials by positive linear
  functions on compact convex polyhedra,'' {\em Pac. J. Math}, vol.~132, no.~1,
  pp.~35--62, 1988.

\bibitem{2013handelmanlyapunov}
M.~A. Ben~Sassi, S.~Sankaranarayanan, X.~Chen, and E.~Abrah{\'a}m, ``Linear
  relaxations of polynomial positivity for polynomial lyapunov function
  synthesis,'' {\em preprint, arXiv:1407.2952}, 2014.

\bibitem{briat}
C.~Briat, ``Robust stability and stabilization of uncertain linear positive
  systems via integral linear constraints: {L}1-gain and {L}2-gain
  characterization,'' {\em International Journal of Robust and Nonlinear
  Control}, vol.~23, no.~17, pp.~1932--1954, 2013.

\bibitem{boydconvex}
S.~P. Boyd and L.~Vandenberghe, {\em Convex optimization}.
\newblock Cambridge university press, 2004.

\bibitem{stengle}
G.~Stengle, ``A nullstellensatz and a positivstellensatz in semialgebraic
  geometry,'' {\em Mathematische Annalen}, vol.~207, no.~2, pp.~87--97, 1974.

\end{thebibliography}
\end{document}